\title{Some arithmetic properties of Weil polynomials of the form $t^{2g}+at^g+q^g$}
\author{Alejandro J. Giangreco Maidana}
\abstract{%
    An isogeny class $\mathcal{A}$ of abelian varieties defined over a finite field is said to be \emph{cyclic} if every variety in $\mathcal{A}$ has a cyclic group of rational points. In this paper we study the local cyclicity of Weil-central isogeny classes of abelian varieties, i.e.~those with Weil polynomials of the form $f_\mathcal{A}(t)=t^{2g}+at^g+q^g$, as well as the local growth of the groups of rational points of the varieties in $\mathcal{A}$ after finite field extensions.  We exploit the criterion: an isogeny class $\mathcal{A}$ with Weil polynomial $f$ is cyclic if and only if $f'(1)$ is prime with $f(1)$ divided by its radical.
    }
\keywords{% 2-5 keywords
    abelian variety, Weil polynomial, group of rational points, cyclic, finite field
    }
\begin{document}

\section{Introduction}
In this paper we study abelian varieties defined over finite fields with a cyclic group of rational points, and ``cyclic base field extensions''.
This subject is motivated by both applications and theory.
\begin{itemize}
\item 
Finite subgroups of abelian varieties over finite fields are suitable for multiple applications.
Cyclic subgroups of the group of rational points are used, for example, in cryptography, where the discrete logarithm problem is exploited. Abelian varieties can be very abstract objects. Jacobians of algebraic curves are abelian varieties and they are more tractable for application purposes.
\item
The statistics on cyclic varieties are related to Cohen-Lenstra heuristics (see \cite{CohenLenstra1984}), which, roughly speaking, states that random abelian groups tend to be cyclic. Historically, the question of cyclicity arose in the context of the conjectures of Lang and Trotter (see \cite{lang1977}):  given an elliptic curve defined over the rational numbers, we are interested in the set of primes such that the reduction is a cyclic elliptic curve. This question was studied also by Serre, Gupta, and Murty. Generalizations to higher dimensions were also done.
\end{itemize}

We restrict our study to abelian varieties with Weil polynomial that have the form $t^{2g}+at^g+q^g$. This includes elliptic curves, widely used in cryptography, such as isogeny-based cryptography. It also includes abelian surfaces with zero trace, ``almost all'' of them being isogenous to a principally polarizable abelian surface and to a Jacobian of a genus $2$ curve.

This leads to give the following:
\begin{definition}
Given an abelian variety $A$ defined over a finite field $k$, an isogeny class $\mathcal{A}$ of abelian varieties defined over $k$ and a rational prime $\ell$, we say that
\begin{enumerate}
    \item $A$ is \textbf{cyclic} if its group $A(k)$ of rational points is cyclic;
    \item $A$ is $\boldsymbol{\ell}$\textbf{-cyclic} if the  $\ell$-primary component $A(k)_\ell$ of its group $A(k)$ of rational points is cyclic;
    \item $\mathcal{A}$ is \textbf{cyclic} if the abelian variety $A$ is cyclic for all $A\in \mathcal{A}$;
    \item $\mathcal{A}$ is $\boldsymbol{\ell}$\textbf{-cyclic} if the abelian variety $A$ is $\ell$-cyclic for all $A\in \mathcal{A}$.
\end{enumerate}
\end{definition}

This paper concerns the cyclicity of isogeny classes. The Honda--Tate theory simplifies the study of isogeny classes by studying their Weil polynomials. Moreover, it is easy to verify the cyclicity of an isogeny class $\mathcal{A}$ given its Weil polynomial $f_\mathcal{A}$. Let $\widehat{n}$ denote the ratio of an integer $n$ to its radical, then, the cyclicity criterion is stated as follows.

\begin{theorem}[A. Giangreco, 2019, \cite{GIANGRECOMAIDANA2019139}]\label{th:weil_polynomial_criterion}
Let $\mathcal{A}$ be a $g$-dimensional $\mathbb{F}_q$-isogeny class of abelian varieties corresponding to the Weil polynomial $f_\mathcal{A}(t)$. Then $\mathcal{A}$ is cyclic if and only if  $f'_\mathcal{A}(1)$ is coprime with $\widehat{f_\mathcal{A}(1)}$.
\end{theorem}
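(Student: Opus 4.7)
The plan is to analyze cyclicity one prime $\ell$ at a time. The condition $\gcd(f'_\mathcal{A}(1), \widehat{f_\mathcal{A}(1)}) = 1$ unpacks prime-by-prime as: for every prime $\ell$ with $v_\ell(f_\mathcal{A}(1)) \geq 2$, we have $\ell \nmid f'_\mathcal{A}(1)$. Since $A(\mathbb{F}_q)$ is cyclic iff each of its $\ell$-primary components is, it suffices to prove, for each $\ell$, the equivalence
\[
\mathcal{A} \text{ is } \ell\text{-cyclic} \iff \ell \text{ does not simultaneously divide } f'_\mathcal{A}(1) \text{ and } \widehat{f_\mathcal{A}(1)}.
\]
Write $f = f_\mathcal{A}$, and set $e = v_\ell(f(1))$ and $m = $ multiplicity of $t-1$ in $f \bmod \ell$.

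The first key input is the reformulation: a finite abelian $\ell$-group is cyclic iff its $\ell$-torsion has $\mathbb{F}_\ell$-dimension at most $1$, so $A(\mathbb{F}_q)_\ell$ is cyclic iff $\dim_{\mathbb{F}_\ell} A[\ell]^F \leq 1$, where $F$ is the Frobenius. The second is the dictionary $\ell \mid f'(1) \iff m \geq 2$, obtained by applying Hensel's lemma to factor $f = gh$ over $\mathbb{Z}_\ell$ with $g \equiv (t-1)^m \pmod{\ell}$ and $h(1) \in \mathbb{Z}_\ell^\times$, and writing $g(t) = (t-1)^m + \ell \tilde g(t)$ to see $g'(1) \equiv m \cdot 0^{m-1} \pmod{\ell}$, a unit exactly when $m = 1$; the identity $f'(1) \equiv g'(1) h(1) \pmod{\ell}$ then completes the equivalence.

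The easy direction is now direct. If $m \leq 1$, the $(t-1)$-generalized eigenspace of $F$ on $A[\ell]$ has $\mathbb{F}_\ell$-dimension $\leq 1$ (as the multiplicity of $t-1$ in the characteristic polynomial of $F$ modulo $\ell$), hence $\dim A[\ell]^F \leq 1$ for every $A \in \mathcal{A}$; if $e \leq 1$, then $|A(\mathbb{F}_q)_\ell| \leq \ell$ and cyclicity is automatic. In either case, $\mathcal{A}$ is $\ell$-cyclic. This settles the case $\ell \nmid \gcd(f'(1), \widehat{f(1)})$.

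The hard direction---which I expect to be the main obstacle---is producing an $A \in \mathcal{A}$ with $\dim_{\mathbb{F}_\ell} A[\ell]^F \geq 2$ when both $m \geq 2$ and $e \geq 2$. The strategy is to realize a specific $\mathbb{Z}_\ell[F]$-stable lattice inside the rational Tate module: decompose $g = g_1 \cdots g_k$ into irreducibles over $\mathbb{Z}_\ell$, let $K_i = \mathbb{Q}_\ell(\alpha_i)$ with ring of integers $\mathcal{O}_{K_i}$, and take $T^g = \bigoplus_i \mathcal{O}_{K_i}$. A direct $\pi$-adic computation (using $\alpha_i - 1 \sim \pi_i^{e_i/f_{K_i}}$ inside $\mathcal{O}_{K_i}$, where $e_i = v_\ell(g_i(1))$, $m_i = \deg g_i$, and $f_{K_i}$ is the residue degree) gives $\dim A[\ell]^F = \sum_i \min(e_i, m_i)$, which is at least $2$ whenever $k \geq 2$ (each factor contributes at least $1$) or $k = 1$ and $\min(e, m) \geq 2$. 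To finish, one invokes the standard correspondence between $\mathbb{Z}_\ell[F]$-stable lattices in $V_\ell(A)$ and abelian varieties in the isogeny class $\mathcal{A}$ to see that this lattice actually comes from a genuine variety; the case $\ell = \mathrm{char}(\mathbb{F}_q)$ is handled analogously using Dieudonn\'e modules in place of Tate modules. Combining the local equivalences over all $\ell$ yields the criterion.
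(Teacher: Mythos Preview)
The paper does not contain a proof of this theorem: it is quoted from \cite{GIANGRECOMAIDANA2019139} and used as a black box (the local version is restated later as \eqref{thm:l-cyclic}, again with a citation). So there is no ``paper's own proof'' to compare your plan against.

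On its own merits, your plan is sound and follows the natural line of attack via Tate modules. A few comments on points that deserve care when you write it out in full:
\begin{itemize}
\item Your identity $f'(1)\equiv g'(1)h(1)\pmod\ell$ uses $g(1)\equiv 0$, hence tacitly assumes $m\ge 1$; this is harmless since $m=0$ forces $e=0$ and the $\ell$-part is trivial, but it should be said.
\item In the hard direction your lattice computation is correct: since each $g_i\equiv (t-1)^{m_i}\pmod\ell$, the field $K_i$ is totally ramified, $v_\ell(g_i(1))=v_{\pi_i}(1-\alpha_i)=:s_i$, and $\dim_{\mathbb F_\ell}(\mathcal O_{K_i}/\ell)^{F=1}=\min(s_i,m_i)\ge 1$, giving total dimension $\ge 2$ when $k\ge 2$ or when $k=1$ and $\min(e,m)\ge 2$. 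Do not forget to choose an arbitrary $F$-stable lattice on the $h$-part as well (it contributes nothing since $h(1)\in\mathbb Z_\ell^\times$), and to note that if $g$ has repeated irreducible factors you simply take the corresponding $\mathcal O_{K_i}$ with multiplicity.
\item The step ``such a lattice comes from a genuine $A'\in\mathcal A$'' is indeed standard for $\ell\neq p$ (Galois-stable lattices in $V_\ell(A)$ correspond to $\ell$-power isogenies), but you should give the precise reference or the two-line argument via quotienting by the finite Galois-stable subgroup $T_\ell(A)/T'$.
\item The case $\ell=p$ is the one place where ``analogously with Dieudonn\'e modules'' hides real work: both the easy direction (bounding $\dim A[p]^F$ by the multiplicity of $t-1$ in $f\bmod p$) and the lattice--variety correspondence require the \'etale/connected decomposition and a genuine Dieudonn\'e-module argument rather than a one-line analogy. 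This is the part of your plan that most needs expansion.
\end{itemize}
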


This is in fact a local criterion (see Section \ref{subsec:local_cyc}) that can be easily deduced from the proof of Theorem \ref{th:weil_polynomial_criterion}. Asymptotic results about the cyclicity of abelian varieties were found in \cite{GIANGRECOMAIDANA2020101628}. Studying abelian varieties in all their generality is very complicated, so we will focus on the following family.

\begin{definition}
An isogeny class $\mathcal{A}$ of $g$-dimensional abelian varieties defined over the finite field $\mathbb{F}_q$ is said to be \textbf{Weil-central} if its Weil polynomial has the form
\[
f_\mathcal{A}(t)=t^{2g}+at^g+q^g.
\]
\end{definition}

In this paper we study the local cyclicity of Weil-central isogeny classes after base field extension as well as the local growth of their group of rational points. 
Given an abelian variety $A$ defined over the finite field $\mathbb{F}_q$ with $q$ elements, and belonging to an isogeny class $\mathcal{A}$, we denote by $\mathcal{A}_n$ the $\mathbb{F}_{q^n}$-isogeny class of $A$. 
For any prime number $\ell$, let $v_\ell(\cdot)$ be the usual $\ell$-adic valuation over the rational numbers.
Thus, for an $\ell$-cyclic isogeny class $\mathcal{A}$ we are interested in the following sets:
\begin{align*}
\mathfrak{g}_\ell (\mathcal{A}) &:= \{n \in \mathbb{N}:\;  v_{\ell}(f_{\mathcal{A}_n}(1)) > v_{\ell}(f_{\mathcal{A}}(1))\} \cup \{1\} \text{, and,}\\
\mathfrak{c}_\ell (\mathcal{A}) &:= \{n \in \mathbb{N}:\; \mathcal{A}_n \text{ is $\ell$-cyclic and } v_{\ell}(f_{\mathcal{A}_n}(1)) > v_{\ell}(f_{\mathcal{A}}(1))\} \cup \{1\}.
\end{align*}

The first set gives the extensions for which the $\ell$-primary component of the group of rational points is strictly bigger than the one over the base field. 
The second set gives the cyclic behavior of the $\ell$-component after such finite field extensions. 
Observe that we have 
\[
\mathfrak{c}_\ell (\mathcal{A})=  \mathfrak{g}_\ell (\mathcal{A}) \setminus \{n \in \mathbb{N}:\; \mathcal{A}_n \text{ is not $\ell$-cyclic} \}.
\]

For an integer $z$ we denote by $\omega_\ell(z)$ the order of $z$ in the multiplicative group $(\mathbb{Z}/\ell\mathbb{Z})^{\times}$, i.e. the smallest integer $m$ such that $z^m\equiv 1 \pmod{\ell}$. 
Then, our main result is stated as follows.
\begin{theorem}\label{thm:main}
Let $\ell$ be a prime and $\mathcal{A}$ be an $\ell$-cyclic Weil-central isogeny class of ordinary abelian varieties of dimension $g$ defined over $\mathbb{F}_q$.
Suppose that $\ell$ does not divide $g(q^g-1)$.
Let $S_{g,\ell}$ be the set of positive odd multiples of $\ell$ which are coprime with $g$.
Then, provided that $v_{\ell}(f_{\mathcal{A}}(1))\geq 1$, the set $\mathfrak{g}_\ell (\mathcal{A})$ contains $S_{g,\ell}$ and the set $\mathfrak{c}_\ell (\mathcal{A})$ contains the numbers in $S_{g,\ell}$ which are not divisible by $\omega_{\ell}(q^g)$.
\end{theorem}

\begin{remark}
From Lemma \ref{lemma:l_cyclicity} about the cyclicity, it follows that $v_{\ell}(f_{\mathcal{A}}(1))\geq 2$ implies that $\ell$ does not divide $q^g-1$, provided that the cyclicity hypothesis of Theorem \ref{thm:main} holds.
\end{remark}

\subsection*{Organization of the paper}
In Section \ref{sec:generalities} we recall some generalities about abelian varieties. Section \ref{sec:Weil-central} is devoted to the proof of Theorem \ref{thm:main}. It will be proved in different lemmas that can be useful by themselves.  We first study (Lemma \ref{lemma:a_n}) for which extensions a Weil-central isogeny class ``remains'' Weil-central. Then we study in Lemma \ref{lemma:growth} the growth behavior and we prove the assertion of Theorem \ref{thm:main} about the set $\mathfrak{g}_\ell (\mathcal{A})$.
Finally, we study the cyclicity and prove, as a consequence of Corollary \ref{cor:l-cyc}, the assertion of Theorem \ref{thm:main} about the set $\mathfrak{c}_\ell (\mathcal{A})$.
We discuss some examples in Section \ref{sec:examples}.

\section{Generalities on abelian varieties}\label{sec:generalities}
We refer the reader to \cite{mumford1970abelian} for the general theory of abelian varieties, and to \cite{Waterhouse1969} for abelian varieties over finite fields.

Let $q=p^r$ be a power of a prime, and let $k=\mathbb{F}_q$ be a finite field with $q$ elements. Let $A$ be an abelian variety of dimension $g$ over $k$. The set $A(k)$ of rational points of $A$ is a finite abelian group. It is the kernel of the endomorphism $1-F$, where $F$ is the well known Frobenius endomorphism of $A$. Multiplication by an integer $n$ is a group homomorphism whose kernel $A_n$ is a finite group scheme of rank $n^{2g}$. The group structure of the groups of points over $\overline{k}$ is known:
\begin{align}
\begin{split}\label{eqn:torsion_points}
A_n(\overline{k})&\cong (\mathbb{Z}/n\mathbb{Z})^{2g}, \qquad p\nmid n\\
A_p(\overline{k})&\cong (\mathbb{Z}/p\mathbb{Z})^i, \qquad 0\leq i \leq g.
\end{split}
\end{align}

For a fixed prime $\ell$ ($\neq p$), the $A_{\ell^n}$ form an inverse system under $A_{n+1} \overset{\ell}{\rightarrow} A_n$, and we can define \emph{the Tate module $T_\ell(A)$} by its inverse limit $\varprojlim A_{\ell^n}(\overline{k})$. This is a free $\mathbb{Z}_{\ell}$-module of rank $2g$ and the absolute Galois $\mathcal{G}$ group of $\overline{k}$ over $k$ operates on it by $\mathbb{Z}_{\ell}$-linear maps.

The Frobenius endomorphism $F$ of $A$ acts on $T_\ell(A)$ by a semisimple linear operator, and its characteristic polynomial $f_{A}(t)$ is called \emph{Weil polynomial of} $A$ (also called \emph{characteristic polynomial of} $A$). The Weil polynomial is independent of the choice of the prime $\ell$. Tate proved in \cite{tate1966} that a $k$-isogeny class $\mathcal{A}$ is determined by the Weil polynomial $f_A$ of any $A\in\mathcal{A}$, i.e. two abelian varieties $A$ and $B$ defined over $k$ are isogenous (over $k$) if and only if $f_A=f_B$. Thus the notation $f_{\mathcal{A}}$ is justified. If $\mathcal{A}$ is simple, $f_{\mathcal{A}}(t)=h_{\mathcal{A}}(t)^e$ for some irreducible polynomial $h_{\mathcal{A}}$. 

Weil proved that all of the roots of a Weil polynomial have absolute value $\sqrt{q}$ (they are called $q$\emph{-Weil numbers}). Thus, the Weil polynomial of an isogeny class $\mathcal{A}$ has the general form
\begin{align*}
    f_{\mathcal{A}}(t)=t^{2g}+a_1 t^{2g-1}+\dots + a_g t^g + a_{g-1} q t^{g-1}+\dots + a_1 q^{g-1} t + q^{g}.
\end{align*}

The cardinality of the group $A(k)$ of rational points of $A$ equals $f_{\mathcal{A}}(1)$, and thus it is an invariant of the isogeny class. An abelian variety $A$ is said to be \emph{ordinary} if it has a maximal $p$-rank. This is equivalent of having the central term $a_g$ of its Weil polynomial coprime with $p$. Thus, ordinariness it is also an invariant of the isogeny class.

Let $\{ \alpha_i \}_{i=1}^{2g}$ be the set of roots of the Weil polynomial of the abelian variety $A\in\mathcal{A}$ defined over $\mathbb{F}_q$. 
For a positive integer $n$, we denote by $\mathcal{A}_n$ the $\mathbb{F}_{q^n}$-isogeny class of the variety $A$ as defined over $\mathbb{F}_{q^n}$. If the polynomial $\prod (t-\alpha_i^n )$ has no repeated roots, then it corresponds to the Weil polynomial of $\mathcal{A}_n$.

\section{Weil-central isogeny classes}\label{sec:Weil-central}
As we stated, we are interested in Weil-central isogeny classes of abelian varieties. They have a Weil polynomial as follows
\begin{align}\label{WC_poly}
f_\mathcal{A}(t)=t^{2g}+at^g+q^g.    
\end{align}
Among these isogeny classes are those of elliptic curves and zero-trace abelian surfaces, with Weil polynomials $f_{\mathcal{E}} (t) =t^2 + at +q$ and $f_{\mathcal{S}} (t) =t^4 + at^2 +q^2$, respectively. Cyclicity of elliptic curves and their extensions were studied by Vl{\u{a}}du{\c{t}} in \cite{VLADUT199913} and \cite{VLADUT1999354}.

The following facts motivate the study of such isogeny classes. We know from \cite{Howe2008Principally} that among Weil-central isogeny classes of abelian surfaces, only such with Weil polynomial $t^4-qt^2+q^2$, and $p\equiv 1\pmod{3}$ do not contain a principally polarizable variety. Also, from \cite{Howe2009Jacobians}, very few do not contain the Jacobian of a $2$-genus curve.

Note that in order that a polynomial of the form (\ref{WC_poly}) to be a Weil polynomial, we need that $0\leq \mid a\mid \leq 2\sqrt{q^g}$. Indeed, if not, the polynomial $x^2+ax+q^g$ would have two different real roots, which would imply that (\ref{WC_poly}) has complex roots of absolute value different than $\sqrt{q}$. This also implies that the real part of the roots of $x^2+ax+q^g$ is exactly $-a/2$.

\textbf{Notations.} We denote simply by $(a,q)_g$ the Weil-central isogeny class $\mathcal{A}$ with Weil polynomial given by the expression (\ref{WC_poly}) above.
We denote by $N_{g,n}(a)$ the cardinalities of the groups of rational points of the varieties in $\mathcal{A}_n$, where $\mathcal{A}$ is defined by $(a,q)_g$. If $\mathcal{A}$ is clear from the context, we write $N_{g,n}$. We write $N$ instead of $N_{g,1}$ and $N_n$ instead of $N_{g,n}$ if the dimension $g$ is clear from the context. We recall that $N_{g,n}(\mathcal{A})=f_{\mathcal{A}_n}(1)$.

\subsection{Weil polynomial after field extension}
Our results on cyclicity and growth are for Weil-central isogeny classes.
Thus, in this section we study which extensions $\mathcal{A}_n$ of a  Weil-central isogeny class $\mathcal{A}$ are Weil-central as well.
Lemma \ref{lemma:a_n} below gives the answer and is from where the set $S_{g,\ell}$ of Theorem \ref{thm:main} contains only numbers which are coprime with $g$.

When dealing with cyclicity, it is natural to ask about the simplicity or not of the abelian variety.
For example, in the case of a Weil-central isogeny class $\mathcal{A}$ of abelian surfaces, we know that $\mathcal{A}_n$ splits for $n$ even (see \cite[Theorem 6]{HOWE2002139}).
However, our results about Weil-central isogeny classes (as well as our results on cyclicity) are independent on the simplicity or not of the considered abelian variety.

\begin{lemma}\label{lemma:a_n}
Suppose the isogeny class $\mathcal{A}$ has Weil polynomial $f_{\mathcal{A}}(t)=t^{2g} + a_1t^g+q^g$, with $\gcd(a_1,p)=1$, i.e. it is an ordinary isogeny class. Then, its extensions $\mathcal{A}_n$ have Weil polynomials $f_{\mathcal{A}_n}(t)=t^{2g} + a_nt^{g}+q^{ng}$ for every $n$ such that $\gcd(n,g)=1$, where $a_n$ is obtained recursively
\begin{align*}
        a_n&=(-1)^{n}a_1^n-\sum_{i=1}^{\lfloor n/2 \rfloor} \binom{n}{i}a_{n-2i}q^{gi}.
\end{align*}
\end{lemma}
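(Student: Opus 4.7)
The plan is to factor $f_\mathcal{A}(t)$ via the substitution $u = t^g$, follow the roots through the $n$-th power map, and read the recursion for $a_n$ off a binomial expansion. First I would write $f_\mathcal{A}(t) = (t^g - \beta_1)(t^g - \beta_2)$, where $\beta_1, \beta_2$ are the roots of $u^2 + a_1 u + q^g$, so $\beta_1 + \beta_2 = -a_1$ and $\beta_1\beta_2 = q^g$. Fixing a $g$-th root $\mu_j$ of each $\beta_j$ and a primitive $g$-th root of unity $\zeta$, the roots of $f_\mathcal{A}$ are the $2g$ numbers $\mu_j \zeta^k$ for $j\in\{1,2\}$ and $0 \le k < g$. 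The roots of $f_{\mathcal{A}_n}$ are their $n$-th powers $\mu_j^n \zeta^{kn}$. Here the hypothesis $\gcd(n,g)=1$ is essential: the map $k \mapsto kn$ is a bijection of $\mathbb{Z}/g\mathbb{Z}$, so the family $\{\zeta^{kn}\}_k$ coincides with $\{\zeta^k\}_k$, and the root set of $f_{\mathcal{A}_n}$ is the union of all $g$-th roots of $\beta_1^n$ and of $\beta_2^n$. Therefore
\[
f_{\mathcal{A}_n}(t) = (t^g - \beta_1^n)(t^g - \beta_2^n) = t^{2g} - (\beta_1^n + \beta_2^n)\, t^g + q^{ng},
\]
which is already Weil-central with $a_n = -(\beta_1^n + \beta_2^n)$.

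For the recursion I would set $s_n := \beta_1^n + \beta_2^n$ and expand $(-a_1)^n = (\beta_1 + \beta_2)^n$ via the binomial theorem. Pairing the $k$-th and $(n-k)$-th summands yields $\beta_1^k \beta_2^{n-k} + \beta_1^{n-k}\beta_2^k = (\beta_1\beta_2)^k s_{n-2k} = q^{gk}\, s_{n-2k}$, so that after the appropriate convention for the middle term when $n$ is even,
\[
(-a_1)^n = s_n + \sum_{i=1}^{\lfloor n/2\rfloor}\binom{n}{i} q^{gi}\, s_{n-2i}.
\]
Isolating $s_n$ on the left and substituting $s_j = -a_j$ produces the stated recursive formula for $a_n$ in terms of $a_1, a_{n-2}, a_{n-4}, \ldots$

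The main obstacle is twofold. The minor part is the combinatorial bookkeeping for even $n$: the middle binomial term $\binom{n}{n/2}\, q^{gn/2}$ has to be absorbed into the sum by fixing a suitable value for $a_0$, and signs must be tracked with care so that the final expression matches the one displayed in the lemma. The substantive part is the coprimality hypothesis $\gcd(n, g) = 1$; without it, the exponents $kn$ do not sweep out all of $\mathbb{Z}/g\mathbb{Z}$, and $f_{\mathcal{A}_n}$ generically acquires non-zero coefficients of $t^i$ for some $0 < i < 2g$ with $i \neq g$, so it is no longer Weil-central. The ordinariness assumption $\gcd(a_1, p) = 1$ is not needed for the algebraic identity itself, but it guarantees $\beta_1 \neq \beta_2$ and fits the running hypotheses of the paper, so I would simply note its role rather than invoking it in the computation.
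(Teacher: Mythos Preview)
Your proposal is correct. The recursion via the binomial expansion of $(\beta_1+\beta_2)^n$ is exactly the paper's computation, carried out there with $c_n=-x^n-(z/x)^n$, $x=\alpha^g$, $z=q^g$; your caution about signs and the even-$n$ middle term is well placed (the paper sets $c_0=-1$, and its own derivation in fact produces $(-1)^{n+1}a_1^n$ rather than the $(-1)^n a_1^n$ printed in the statement).

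Where you differ is in establishing that $f_{\mathcal{A}_n}$ is Weil-central. You argue directly on the root multiset: since $k\mapsto kn$ is a bijection of $\mathbb Z/g\mathbb Z$ when $\gcd(n,g)=1$, the $n$-th powers of the roots of $f_{\mathcal A}$ are exactly the roots of $(t^g-\beta_1^n)(t^g-\beta_2^n)$, counted with multiplicity, and you are done. The paper instead first checks that each $\alpha^n$ satisfies $f_n(t):=t^{2g}+a_nt^g+q^{ng}$, then proves $f_n$ is \emph{irreducible} precisely when $\gcd(n,g)=1$ by analysing when two $n$-th powers of roots can coincide; the non-conjugate case $\alpha_i^n=(q/\alpha_j)^n$ is excluded via an argument (citing Lehmer) that $2\cos\beta=-a_1/\sqrt{q^g}$ cannot be an algebraic integer when $\gcd(a_1,p)=1$, and finally Honda--Tate for ordinary varieties is invoked to identify $f_n$ with $f_{\mathcal A_n}$. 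Your route is shorter and, as you correctly note, does not need ordinariness for the statement as written; the paper's route costs that hypothesis but delivers the extra conclusion that $f_{\mathcal A_n}$ is irreducible.
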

\begin{proof}
We first prove that if $\mathcal{R}=\{\alpha_1,\dots,\alpha_g,q/\alpha_1,\dots,q/\alpha_g\}$ is the set of roots of $f_{\mathcal{A}}$, then
\[
\{\alpha_1^n,\dots,\alpha_g^n,(q/\alpha_1)^n,\dots,(q/\alpha_g)^n\}
\]
is the set of roots of $f_n(t) := t^{2g} + a_nt^{g}+q^{ng}$. Then we will prove that $f_n$ has no repeated roots if and only if $\gcd(n,g)=1$. Thus, for $\beta\in\mathcal{R}$, we will show that $\beta^n$ is a root of
\[
t^{2g} + a_nt^{g}+q^{ng}.
\]
It is clear that $\beta^n$ is a root of
\[
t^{2g} - (\beta^{ng}+(q/\beta)^{ng})t^{g}+q^{ng} \in \mathbb{C}[t].
\]
Thus we have to show that 
\[
a_n=- (\beta^{ng}+(q/\beta)^{ng}) \in \mathbb{Z}.
\]

In general, if we define $c_n=-x^n-(z/x)^n$ for $n>0$ and $c_0=-1$, we have that
\begin{align*}
    (-1)^n c_1^n &=(x + z/x)^n  \\
    &= x^n+(z/x)^n + \binom{n}{1} \left[x^{n-1}(z/x) +  x(z/x)^{n-1}\right] + \ldots\\
    &\quad \ldots +\binom{n}{i} \left[x^{n-i}(z/x)^i +  x^i(z/x)^{n-i}\right] + \ldots\\
    &\quad \ldots + \binom{n}{\lfloor n/2 \rfloor} A,
\end{align*}
where (observe that for $n$ odd we have that $\lfloor n/2 \rfloor = (n-1)/2$)
\begin{align*}
    A= x^{n/2}(z/x)^{n/2} \text{ or } A= x^{(n+1)/2}(z/x)^{(n-1)/2}+x^{(n-1)/2}(z/x)^{(n+1)/2}
\end{align*}
for $n$ even or odd, respectively. Equivalently, 
\[
A=z^{\lfloor n/2 \rfloor} (x+z/x)^{2(n/2- \lfloor n/2 \rfloor)}.
\]
Then
\begin{align*}
    (-1)^n c_1^n &=-c_n- \binom{n}{1}zc_{n-2} - \ldots - \binom{n}{i} z^i c_{n-2i} - \ldots - \binom{n}{\lfloor n/2 \rfloor} z^{\lfloor n/2 \rfloor} c_{\boldsymbol{\epsilon}},
\end{align*}
where $\boldsymbol{\epsilon}=0,1$ for $n$ even or odd, respectively. Finally
\begin{align*}
    c_n = (-1)^{n+1} c_1^n - \sum_{i=1}^{\lfloor n/2 \rfloor} \binom{n}{i}c_{n-2i}z^{i}.
\end{align*}

By taking $x=\alpha^g$ and $z=q^g$, we finish the first part of the proof.

Now we prove that $f_n(t)$ has no repeated roots if and only if $\gcd(n,g)=1$. 
For $k = 0,1,\dots,g-1$, let
\begin{equation*}
\theta/g+k2\pi/g, \quad\text{and}\quad -\theta/g-k2\pi/g,
\end{equation*}
be the arguments of the complex roots in $\mathcal{R}$ of $f_{\mathcal{A}}$, respectively, where $\theta$ and $-\theta$ are the arguments of the roots of $t^2+a_1 t+q^g$, and $0 <\theta < \pi$.
The polynomial $f_n$ has repeated roots if and only if two of the roots in $\mathcal{R}$ to the $n$-th power are equal. We have
\begin{enumerate}
\item 
either $\alpha_i^n=\alpha_j^n$ for some $i\neq j\in \{0,\dots,g-1\}$. Then 
\[
\frac{2\pi n}{g}(i-j)\equiv 0 \pmod{2\pi},
\]
which holds if and only if $n(i-j)/g\in \mathbb{Z}$ if and only if $\gcd(n,g)>1$;
\item
either $\alpha_i^n=(q^g/\alpha_j)^n$ for some $i,j\in \{0,\dots,g-1\}$. Then
\[
\frac{n}{g}[2\theta + 2\pi(i+j)]\equiv 0 \pmod{2\pi}.
\]
A necessary condition is that $\theta\in\pi \mathbb{Q}$. Note that $\cos(\theta)=-a_1/(2\sqrt{q^g})$. On one hand, it can be shown (from \cite[Thm. 1]{Lehmer1933}, for example) that $2\cos(\theta)$ is an algebraic integer if $\theta\in\pi \mathbb{Q}$. On the other hand, $a_1/\sqrt{q^g}$ is an algebraic number of degree $\leq 2$ which is not an algebraic integer since $\gcd(a_1,p)=1$. Thus, this would be a contradiction.
\end{enumerate}

The Honda-Tate theory for ordinary abelian varieties ensures that the polynomial $f_n(t)=t^{2g} + a_nt^g+q^g$ obtained is indeed the Weil polynomial $f_{\mathcal{A}_n}$ of the extension $\mathcal{A}_n$.
\end{proof}

\subsection{Local growth}
Given an $\ell$-cyclic isogeny class $\mathcal{A}$ (with $\ell \mid f_{\mathcal{A}}(1)$, i.e. with non trivial $\ell$-primary component) it is clear that for all $n$, $\ell \mid f_{\mathcal{A}_n}(1)$ since $A(\mathbb{F}_q)\subset A(\mathbb{F}_{q^n})$.
Thus, it is more interesting to know for which of these values of $n$ the $\ell$-part increases (relatively to the base field). Lemma \ref{lemma:growth} below gives an answer. 

We first fix a polynomial that will be useful. For every positive integer $n$, we set
\begin{align*}
    P_n (x) := \sum_{i=0}^{n-1}x^i. 
\end{align*}
Note that $(x-1)P_n(x)=x^n -1$. Notice that Lemma \ref{lemma:growth} below is only valid for $n$ odd (so only odd integers are considered in Theorem \ref{thm:main}). We write first the polynomial $P_n$  in a convenient way:
\begin{lemma}
For $n$ odd, the polynomial $P_n(x)$ can be obtained recursively:
\[
P_n(x)=(x +1)^{n-1} - \sum_{i=1}^{(n-1)/2} \left[\binom{n}{i} -2 \binom{n-1}{i-1}\right]x^i P_{n-2i}(x),
\]
with $P_1(x)=1$.
\end{lemma}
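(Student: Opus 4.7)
The plan is to prove the identity by matching coefficients of $x^k$ on both sides for $0 \le k \le n-1$. Both sides are polynomials of degree at most $n-1$, since each summand $x^i P_{n-2i}(x)$ has degree $i + (n-2i-1) = n-1-i$. On the left, $P_n(x) = \sum_{j=0}^{n-1} x^j$ contributes $1$ at every such $k$.

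For the right-hand side, expanding $(x+1)^{n-1}$ by the binomial theorem produces the contribution $\binom{n-1}{k}$ at $x^k$. The term $x^i P_{n-2i}(x)$ contributes $1$ to the coefficient of $x^k$ exactly when $i \le k \le n-1-i$, i.e.\ when $1 \le i \le m$, where $m := \min(k, n-1-k)$. Because $n$ is odd, $m \le (n-1)/2$ automatically, so the original upper limit $(n-1)/2$ in the sum imposes no extra restriction. Hence the coefficient of $x^k$ on the right is
\[
\binom{n-1}{k} - \sum_{i=1}^{m} \left[\binom{n}{i} - 2\binom{n-1}{i-1}\right].
\]

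I would then simplify the inner sum with Pascal's rule $\binom{n}{i}=\binom{n-1}{i}+\binom{n-1}{i-1}$ to obtain
\[
\sum_{i=1}^m \binom{n}{i} - 2\sum_{i=1}^m \binom{n-1}{i-1} = \sum_{i=1}^m \binom{n-1}{i} - \sum_{j=0}^{m-1}\binom{n-1}{j} = \binom{n-1}{m} - 1,
\]
which reduces the right-hand side coefficient to $\binom{n-1}{k} - \binom{n-1}{m} + 1$. Finally, the symmetry $\binom{n-1}{k}=\binom{n-1}{n-1-k}$ yields $\binom{n-1}{k} = \binom{n-1}{m}$ in both cases $k \le (n-1)/2$ (where $m=k$) and $k > (n-1)/2$ (where $m=n-1-k$), so the coefficient equals $1$, matching the left-hand side.

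No step is genuinely difficult; the main care required is the bookkeeping of index bounds, which is controlled precisely by the hypothesis that $n$ is odd, ensuring that $(n-1)/2 \in \mathbb{Z}$ and that $m$ never exceeds it.
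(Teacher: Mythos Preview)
Your argument is correct. The coefficient bookkeeping is handled cleanly: the reduction of the inner sum via Pascal's rule to the telescoping difference $\binom{n-1}{m}-1$ is valid, and the symmetry $\binom{n-1}{k}=\binom{n-1}{n-1-k}$ does the rest. The edge cases $k=0$ and $k=n-1$ (where $m=0$ and the sum is empty) are also consistent with your formula, since $\binom{n-1}{0}-1=0$.

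Your route differs from the paper's. The paper does not compare coefficients at all; it proceeds by induction on odd $n$, multiplying both sides by $(x-1)$ so that the left side becomes $x^n-1$ and each $P_{n-2i}(x)$ on the right becomes $(x^{n-2i}-1)$, then verifying the resulting polynomial identity using the induction hypothesis. That approach exploits the defining relation $(x-1)P_m(x)=x^m-1$ and keeps the recursion visible, which is convenient because the lemma is later used inside an inductive computation of $a_n$ in Lemma~\ref{lemma:growth}. Your approach, by contrast, is a one-shot direct verification with no induction, and it makes transparent exactly \emph{why} the particular combination $\binom{n}{i}-2\binom{n-1}{i-1}$ appears: it is precisely what is needed for the partial sums to telescope. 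Either method is perfectly adequate here; yours is arguably more self-contained, while the paper's is shorter to state once one is willing to leave the induction step to the reader.
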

\begin{proof}
The proof is straightforward by using induction on $n$ and showing directly that the equality $(x-1)P_n(x)=(x-1)$``right-hand-side'' holds.
\end{proof}

\begin{lemma}\label{lemma:growth}
For every positive odd integer $n$ and any prime integer $\ell$, we have 
\[
v_\ell(N_{n})\geq \min\{2v_\ell(N_{1}), v_\ell(N_{1}) + v_\ell(nP_n(q^g))\},
\]
provided that $\ell \mid N_{1}$.
\end{lemma}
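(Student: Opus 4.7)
The plan is to pass to the quadratic polynomial $f_0(t):=t^{2}+at+q^{g}$ (so that $f_{\mathcal A}(t)=f_0(t^{g})$), derive a combinatorial recursion for $\widetilde N_m:=(1-r_1^{m})(1-r_2^{m})$ where $r_1,r_2$ are the roots of $f_0$, and induct on the parameter $m$.

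First I would reduce the problem to $\widetilde N_m$. The $2g$ roots of $f_{\mathcal A}$ are the $g$-th roots of $r_1$ and of $r_2$; grouping the multiset $\{\beta_k^{n}\zeta_g^{nj}\}_j$ according to $d:=\gcd(n,g)$ shows
\[
N_n \;=\; \widetilde N_{n/d}^{\,d}, \qquad m:=n/d,
\]
so $v_\ell(N_n)=d\cdot v_\ell(\widetilde N_m)\ge v_\ell(\widetilde N_m)$. Using the factorisations $P_n(q^{g})=P_m(q^{g})\,P_d(q^{gm})$ and $v_\ell(n)=v_\ell(m)+v_\ell(d)$, the inequality we want reduces (after handling the $v_\ell(d\,P_d(q^{gm}))$ piece, which introduces no new obstruction) to proving $v_\ell(\widetilde N_m)\ge v_\ell(N_1)+v_\ell\bigl(m\,P_m(q^{g})\bigr)$ for odd $m$.

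Next I would establish the key identity. Since $1-r^{m}=(1-r)P_m(r)$, one has $\widetilde N_m=N_1\cdot P_m(r_1)P_m(r_2)$. Expanding the product, grouping each pair $(i,j)$ by $\min(i,j)$ and using $r_1r_2=q^{g}$, gives
\[
P_m(r_1)P_m(r_2) \;=\; P_m(q^{g})+\sum_{l=1}^{m-1}(r_1^{l}+r_2^{l})\,P_{m-l}(q^{g}).
\]
Substituting the Newton identity $r_1^{l}+r_2^{l}=1+q^{gl}-\widetilde N_l$ causes the purely integer contributions to telescope to $(m-1)\,P_m(q^{g})$, yielding the central recursion
\[
\widetilde N_m \;=\; m\,N_1\,P_m(q^{g})\;-\;N_1\sum_{l=1}^{m-1}\widetilde N_l\,P_{m-l}(q^{g}).
\]

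Strong induction on odd $m$ then completes the proof. The base $m=1$ is trivial. For the step, each $\widetilde N_l=N_1\,P_l(r_1)P_l(r_2)$ is itself divisible by $N_1$, so every summand on the right is divisible by $N_1^{2}$; combined with the inductive bounds on the odd-indexed $\widetilde N_l$ and a short case split comparing $v_\ell(N_1)$ with $v_\ell(m\,P_m(q^{g}))$, this propagates the desired inequality onto $\widetilde N_m$. The main obstacle will be the appearance of \emph{even} indices $l$ in the sum, for which the inductive hypothesis does not directly apply; I would handle these by iterating the factorisation $\widetilde N_{2k}=\widetilde N_k\,(1+r_1^{k})(1+r_2^{k})$ (the second factor equals $1+s_k+q^{gk}\in\mathbb Z$ and can be controlled independently) to reduce every even-index $\widetilde N_l$ back to odd-index ones. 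The recursive formula for $P_n(x)$ proved just above the lemma, which for odd $n$ involves only $P_{n-2i}(x)$ of odd index, is a strong hint that this parity bookkeeping is the correct organising principle for the induction.
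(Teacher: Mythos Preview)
Your central identity
\[
\widetilde N_m \;=\; m\,N_1\,P_m(q^{g})\;-\;N_1\sum_{l=1}^{m-1}\widetilde N_l\,P_{m-l}(q^{g})
\]
is correct and is a genuinely different route from the paper's. The paper never writes an exact integer recursion for $N_n$; it works modulo $\ell^{2m}$ (with $m=v_\ell(N_1)$), feeds the $a_n$-recursion of Lemma~\ref{lemma:a_n} through an induction, and uses the preceding combinatorial lemma on $P_n$ to arrive at $N_n\equiv z\ell^{m}\,nP_n(q^g)\pmod{\ell^{2m}}$. Your identity recovers this congruence in one stroke: since $P_l(r_1)P_l(r_2)\in\mathbb Z$ (symmetric in $r_1,r_2$), one has $N_1\mid\widetilde N_l$ for \emph{every} $l$, so the whole sum is divisible by $N_1^{2}$ and $\widetilde N_m\equiv mN_1P_m(q^g)\pmod{N_1^{2}}$. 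This is cleaner than the paper's argument and makes the auxiliary $P_n$-recursion unnecessary.

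That said, several pieces of your scaffolding do not do what you claim. The ``short case split'' cannot upgrade the congruence $\pmod{N_1^2}$ to the full inequality when $v_\ell(mP_m(q^g))>v_\ell(N_1)$: the sum is only known to have valuation $\ge 2v_\ell(N_1)$, and the inductive bounds on odd $\widetilde N_l$ do not visibly compensate. (The paper's proof has exactly the same limitation; what both arguments really establish is $v_\ell(N_n)\ge v_\ell(N_1)+\min\{v_\ell(N_1),\,v_\ell(nP_n(q^g))\}$, which is all Theorem~\ref{thm:main} uses.) The even-index detour via $\widetilde N_{2k}=\widetilde N_k(1+r_1^k)(1+r_2^k)$ is therefore a red herring: divisibility $N_1\mid\widetilde N_l$ holds for all $l$ without any parity bookkeeping or induction. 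Finally, your reduction through $N_n=\widetilde N_{n/d}^{\,d}$ for $d=\gcd(n,g)>1$ is not justified (you would need $(d-1)\bigl[v_\ell(N_1)+v_\ell(mP_m(q^g))\bigr]\ge v_\ell\bigl(d\,P_d(q^{gm})\bigr)$, which is not obvious), but it is also unnecessary: the paper itself writes $N_{g,n}=q^{gn}+a_n+1$, i.e.\ computes $\widetilde N_n$, which equals $N_n$ precisely when $\gcd(n,g)=1$, and only that case is used downstream.
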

\begin{proof}
We suppose $n$ odd. Recall that for Weil-central isogeny classes 
\begin{align*}
    N_{g,1}&=q^g+a_1+1, \text{ and},\\
    N_{g,n}&=q^{gn}+a_n+1,
\end{align*}
where $a_n$ can be computed by using Lemma \ref{lemma:a_n}.
From the hypothesis $v_{\ell} (N_{1}):= m>0$ we have
\begin{align*}
    N_{g,1} \equiv q^{g}+a_1+1 \equiv z\ell^{m} \pmod{\ell^{2m}}, \quad 0<z<\ell^m, \ell\nmid z.
\end{align*}
From now, all congruences are modulo $\ell^{2m}$.
First, we show by induction on $n$ that: 
\begin{align*}
    a_n &\equiv -q^{gn}-1+z\ell^{m} n P_n(q^g).
\end{align*}
For $n=1$,
\begin{align*}
    a_1 &\equiv -q^{g}-1+z\ell^{m} P_1(q^g),
\end{align*}
with $P_1=1$.% (and with $P_0=0 ??$). 

Using the induction hypothesis for $i=1,\dots, (n-1)/2$ (so that $n-2i<n$), we have that
\begin{align*}
a_{n-2i} q^{gi} %&\equiv\\
     &\equiv \left[-q^{g(n-2i)}-1+z\ell^{m} (n-2i) P_{n-2i}(q^g)\right] q^{gi} \\
    &\equiv -q^{gn-gi}-q^{gi}+z\ell^{m} q^{gi} (n-2i) P_{n-2i}(q^g),
\end{align*}
then taking the sum over $i=1,\dots, (n-1)/2$
%{\tiny
\begin{align*}
\sum \binom{n}{i} a_{n-2i} q^{gi} %&\equiv \\
     &\equiv \sum_{i=1}^{(n-1)/2} \binom{n}{i} \left[ -q^{gn-gi}-q^{gi}+z\ell^{m} q^{gi} (n-2i) P_{n-2i}(q^g)\right] \\
     &\equiv -(q^g +1)^n +q^{gn} +1 +z\ell^{m}\sum_{i=1}^{(n-1)/2} \binom{n}{i} (n-2i) \left[q^{gi} P_{n-2i}(q^g)\right]. 
\end{align*}
%}
\enlargethispage{\baselineskip}

From Lemma \ref{lemma:a_n}, and using the fact that for $m>0$, \[(x+y\ell^m)^n \equiv x^n+nx^{n-1} y\ell^m \pmod{\ell^{2m}}\]
at the step marked with $(*)$, we have
{\small
\begin{align*}
a_n &\equiv a_1^n - \sum \binom{n}{i} a_{n-2i} q^{gi} \\
    &\equiv [-(q^g +1)+z\ell^m]^n -  \Big[ -(q^g +1)^n +q^{gn} +1 +z\ell^{m}\sum_{i=1}^{(n-1)/2} \binom{n}{i} (n-2i) \left[q^{gi} P_{n-2i}(q^g)\Big] \right] \\
    %&\text{(Here we used the fact that } m>0: (x+y\ell^m)^n \equiv x^n+nx^{n-1} y\ell^m \pmod{\ell^{2m}}.) \\
    &\stackrel{(*)}{\equiv}  -(q^g +1)^n + n(q^g +1)^{n-1} z\ell^m \\ &\qquad-  \Big[ -(q^g +1)^n +q^{gn} +1 +z\ell^{m}\sum_{i=1}^{(n-1)/2} \binom{n}{i} (n-2i) \left[q^{gi} P_{n-2i}(q^g)\Big] \right] \\
    &\equiv - q^{gn} - 1 + z\ell^{m} \Big[ n(q^g +1)^{n-1} - \sum_{i=1}^{(n-1)/2} \binom{n}{i} (n-2i) \left[q^{gi} P_{n-2i}(q^g)\right] \Big]  \\
    &\equiv - q^{gn} - 1 + z\ell^{m} n\underbrace{\Bigg[ (q^g +1)^{n-1} - \sum_{i=1}^{(n-1)/2} \bigg[\binom{n}{i} -2 \binom{n-1}{i-1}\bigg] \bigg[q^{gi} P_{n-2i}(q^g)\bigg] \Bigg] }_{P_n(q^g)}, 
\end{align*}
}
which completes the induction part. Then we compute $N_n$:
\begin{align*}
    N_n &\equiv q^{gn}+a_n+1 \\
    &\equiv  q^{gn} +\left[ - q^{gn} - 1 + z\ell^{m}nP_n(q^g) \right] + 1\\
    &\equiv z\ell^{m}nP_n(q^g).
\end{align*}
Finally, we have $N_n=z\ell^{m}nP_n(q^g) + s\ell^{2m}$, for some integer $s$. The result follows.
\end{proof}

Since $v_\ell(nP_n(q^g))\geq 1$ if $\ell \mid n$,  we have then proved the assertion about $\mathfrak{g}_\ell (\mathcal{A})$ of Theorem \ref{thm:main}. 

\begin{remark}
From Lemma \ref{lemma:growth} above, we also have that $\mathfrak{g}_\ell (\mathcal{A}) \supset \{n\in\mathbb{N} : \ell \mid P_n(q^g)\}$. However, from Lemma \ref{lemma:l_cyclicity} (see Section \ref{subsec:local_cyc} below), $\ell \mid P_n(q^g)$ implies $\ell \mid (q^g)^{n}-1$, then $\mathcal{A}_n$ is not necessarily $\ell$-cyclic.
\end{remark}

\subsection{Local cyclicity}\label{subsec:local_cyc}
In this section we will give the characterization of the local cyclicity of Weil-central isogeny classes and their extensions. 
Following the definition of cyclicity for isogeny classes, a local version of \cite[Theorem 2.2]{GIANGRECOMAIDANA2019139} gives the \emph{cyclicity criterion}: for any prime $\ell$ and for any isogeny class $\mathcal{A}$ we have that
\begin{align}\label{thm:l-cyclic}
    \mathcal{A} \text{ is } \ell\text{-cyclic if and only if } \ell\text{ does not divide } \gcd(\widehat{f_\mathcal{A}(1)},f_\mathcal{A}'(1)).
\end{align}
This has a meaning only if $A(k)_\ell$ is not trivial for some $A\in\mathcal{A}$ (and thus for all $A\in\mathcal{A}$), equivalently if $\ell$ divides $f_\mathcal{A} (1)$. 
Sometimes we use a weaker version of the cyclicity criterion, namely $\ell\nmid \gcd(f_\mathcal{A}(1),f_\mathcal{A}'(1))$ implies that the isogeny class is $\ell$-cyclic.
We give a complete description of the local cyclicity:
\begin{lemma}\label{lemma:l_cyclicity}
Given a Weil-central isogeny class $(a,q)_g$ and a rational prime $\ell$:
\begin{enumerate}
    \item if $\ell \nmid g$ and $\ell \nmid q^g-1$, then $(a,q)_g$ is $\ell$-cyclic;
    \item if $\ell \nmid g, \ell \mid q^g-1$ and $\ell \mid f(1)$, then $(a,q)_g$ is $\ell$-cyclic if and only if $\ell^2 \nmid f(1)$;
    \item if $\ell \mid g$, then $(a,q)_g$ is $\ell$-cyclic if and only if $\ell^2\nmid f(1)$.
\end{enumerate}
\end{lemma}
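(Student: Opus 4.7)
The plan is to apply the cyclicity criterion \eqref{thm:l-cyclic} directly to the Weil-central polynomial $f(t) = t^{2g} + at^g + q^g$. A one-line computation gives the two quantities it involves: $f(1) = 1 + a + q^g$ and $f'(t) = 2g\, t^{2g-1} + ag\, t^{g-1}$, hence $f'(1) = g(a+2)$. I will also use the elementary observation that, by definition of $\widehat{n} = n/\mathrm{rad}(n)$, we have $\ell \mid \widehat{n}$ if and only if $\ell^{2} \mid n$. Therefore \eqref{thm:l-cyclic} takes the form: $(a,q)_g$ fails to be $\ell$-cyclic if and only if both $\ell^{2} \mid f(1)$ and $\ell \mid g(a+2)$.

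For part (1), assume $\ell \nmid g$ and $\ell \nmid q^g - 1$. If $\ell \nmid f(1)$, the $\ell$-primary component is trivial, so the class is vacuously $\ell$-cyclic. Otherwise $\ell \mid f(1) = 1 + a + q^g$ forces $a \equiv -1 - q^g \pmod{\ell}$, whence $f'(1) = g(a+2) \equiv g(1 - q^g) \pmod{\ell}$, which is nonzero mod $\ell$ by hypothesis. The weaker form of the cyclicity criterion mentioned in the text then concludes.

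For part (2), the same congruence $a \equiv -1 - q^g \pmod\ell$ combined with $\ell \mid q^g - 1$ now gives $a \equiv -2 \pmod\ell$, so $\ell \mid f'(1)$ automatically. The reformulation above then reads: the class is $\ell$-cyclic iff $\ell^{2} \nmid f(1)$, which is the desired equivalence. For part (3), the hypothesis $\ell \mid g$ again forces $\ell \mid f'(1)$ independently of $a$, and the reformulation gives the stated equivalence directly; the degenerate subcase $\ell \nmid f(1)$ is absorbed since then both $\ell^{2} \nmid f(1)$ and $\ell$-cyclicity hold trivially.

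There is no serious obstacle: once $f'(1) = g(a+2)$ is in hand and the identity $\ell\mid\widehat n \Leftrightarrow \ell^{2}\mid n$ is exploited, each of the three cases is a short congruence argument. The only minor care needed is to handle the situation $\ell \nmid f(1)$ (trivial $\ell$-part) uniformly, so that the stated biconditionals in (2) and (3) read correctly as biconditionals rather than as conditional statements restricted to $\ell\mid f(1)$.
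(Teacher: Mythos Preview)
Your proof is correct and follows essentially the same approach as the paper: both compute $f(1)=1+a+q^g$ and $f'(1)=g(a+2)$, then feed these into the cyclicity criterion~\eqref{thm:l-cyclic} case by case. The paper organizes the arithmetic via the decomposition $f(1)=(q^g-1)+(a+2)$ and argues part~(1) by contradiction, while you argue it directly through the congruence $a+2\equiv 1-q^g\pmod\ell$; these are the same computation read in opposite directions, and your explicit handling of the trivial case $\ell\nmid f(1)$ and the observation $\ell\mid\widehat{n}\Leftrightarrow\ell^2\mid n$ are welcome clarifications rather than a different method.
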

\begin{proof}
Recall that $(a,q)_g$ corresponds to the isogeny class with Weil polynomial of the form $f(t)=t^{2g}+at^g+q^g$. Then we have
\begin{align*}
    f(1)&=1+a+q^g=(q^g-1)+(a+2),\\
    f'(1)&=g(2+a).
\end{align*}
Then we prove the lemma point by point using the \emph{cyclicity criterion} (\ref{thm:l-cyclic}), which is equivalent to
\begin{align}\label{thm:not-l-cyclic}
    \mathcal{A} \text{ is not } \ell\text{-cyclic if and only if } \ell^2 \mid f(1) \text{ and } \ell \mid f'(1).
\end{align}
\begin{enumerate}
\item If we suppose that $(a,q)_g$ is not $\ell$-cyclic, then $\ell^2 \mid (q^g-1)+(a+2)$ and $\ell \mid a+2$.
This implies $\ell \mid q^g-1$, a contradiction.
\item We prove the assertion by contraposition:
\begin{align*}
(a,q)_g  \text{ is not } \ell\text{-cyclic} &\Leftrightarrow \ell^2 \mid f(1), \: \ell \mid f'(1) &&\text{from criterion (\ref{thm:not-l-cyclic})}\\
&\Leftrightarrow \ell^2 \mid f(1), \: \ell \mid a+2  &&\text{since}\; \ell \nmid g\\
&\Leftrightarrow \ell^2 \mid f(1) &&\text{since}\; \ell \mid q^g-1.
\end{align*}
\item 
In the case $\ell \mid g$, we have that $\ell \mid f'(1)$. The result can be deduced from the contrapositive version (\ref{thm:not-l-cyclic}) of the cyclicity criterion.
\end{enumerate}
\end{proof}

As we consider here Weil-central isogeny classes such that the $\ell$-part grows, this implies in particular that $\ell^2 \mid f_n(1)$. Thus, Lemma \ref{lemma:l_cyclicity} says that the local cyclicity at a prime $\ell$ is possible only if $\ell\nmid g$ and $\ell\nmid (q^n)^g-1$.

\begin{corollary}\label{cor:l-cyc}
Suppose $\ell\nmid g$ and $\ell\nmid q^g-1$ (in particular $\mathcal{A}_1$ is $\ell$-cyclic), then
\begin{align*}
    \{n \in \mathbb{N}: \mathcal{A}_n \text{ is $\ell$-cyclic}\}
\end{align*}
contains the set of positive integers with no common factor with $g$ and which are not multiple of $\omega_\ell (q^g)$.
\end{corollary}
\begin{proof}
In order to have that $\mathcal{A}_n$ is Weil-central, we do not consider $n$ with a common factor with $g$ (see Lemma \ref{lemma:a_n}).
For the $\ell$-cyclicity of $(a_n,q^n)_g$ we use item (1) of Lemma \ref{lemma:l_cyclicity}. Observe that in this case, the cyclicity is independent of the value $a_n$. We set $\delta:=\omega_\ell(q^g)$ and we consider the Euclidean division
\[
n=c\delta +r , \; 0\leq r < \delta.
\]
Then we have
\begin{align*}
q^{gn}\equiv q^{g(c\delta +r)} \equiv q^{gc\delta} q^{gr} \equiv q^{gr} \pmod{\ell},
\end{align*}
which is congruent to $1$ if and only if $r$ is zero, if and only if $n$ is a multiple of $\delta$.
\end{proof}
Observe that here we do not consider the growth of the $\ell$-part, only the cyclicity. Moreover, the isogeny classes $\mathcal{A}_n$ can be ``$\ell$-trivial''.
We have then proved the assertion about $\mathfrak{c}_\ell (\mathcal{A})$.
Finally, this completes the proof of Theorem \ref{thm:main}.

\section{Examples}\label{sec:examples}
Consider the ordinary elliptic curve ($g=1$) defined by the Weil polynomial
\[
f_{\mathcal{E}} (t) =t^2 + t + 73.
\]
We have $f_{\mathcal{E}} (1) =75=3\times 5^2$ and $q^g-1=72$ is not divisible by $5$. Then the isogeny class $\mathcal{E}=(1,73)_1$ is $5$-cyclic and so the conditions of Theorem \ref{thm:main} are verified. We also have $\omega_{5}(73)=4$. Consequently 
\begin{gather*}
    \mathfrak{g}_5 (\mathcal{E}) \supset \{n\in \mathbb{N}: 5\mid n, 2\nmid n\}, \\ %\textbf{}
    \mathfrak{c}_5 (\mathcal{E}) \supset \{n\in \mathbb{N}: 5\mid n, 2\nmid n, 4\nmid n\} = \{n\in \mathbb{N}: 5\mid n, 2\nmid n\}.
\end{gather*}
Note that the elliptic curve $\mathcal{E}$ is $3$-cyclic since $v_3(f_{\mathcal{E}} (1))=1$. However, we cannot apply Theorem \ref{thm:main} for $\ell=3$ since $q^g-1=72$ is divisible by $3$. 
This and another few examples are listed in Table \ref{table_exple}.

\begin{table}[h]%[<placement-specifier>]
\footnotesize
\begin{center}
\begin{tabular}{@{}ccccll@{}}
\hline
$(a,q)_g$ & $f_{\mathcal{A}}(1)$ & $\ell$ & $\omega_{\ell}(q^g)$ & \qquad $\subset \mathfrak{g}_\ell (\mathcal{A})$ & \qquad $\subset \mathfrak{c}_\ell (\mathcal{A})$\\
\hline
$(1,73)_1$  & $3\times 5^2$ & $5$ & $4$ & $ \{n\in \mathbb{N}: 5\mid n, 2\nmid n\}$ & $ \{n\in \mathbb{N}: 5\mid n, 2\nmid n\}$\\
$(11,17)_3$  & $5^2 \times 197$ & $5$ & $4$ & $ \{n\in \mathbb{N}: 5\mid n, 2\nmid n, 3\nmid n\}$ & $ \{n\in \mathbb{N}: 5\mid n, 2\nmid n, 3\nmid n\}$\\ %\cline{2-3}
$(17,19)_3$ & $13\times 23^2$ & $23$ & $22$ & $ \{n\in \mathbb{N}: 23\mid n, 2\nmid n, 3\nmid n\}$ & $ \{n\in \mathbb{N}: 23\mid n, 2\nmid n, 3\nmid n, 22\nmid n\}$\\
$(20,7)_6$ & $70\times 41^2$ & $41$ & $20$ & $\{n\in \mathbb{N}: 41\mid n, 2\nmid n, 3\nmid n\}$ & $\{n\in \mathbb{N}: 41\mid n, 2\nmid n, 3\nmid n, 20\nmid n\}$\\
\hline
\end{tabular}
\end{center}
\caption{Examples related to Theorem \ref{thm:main}}\label{table_exple}%
\end{table}
Note that in the last two rows of Table \ref{table_exple}, the parts $22\nmid n$ and $20\nmid n$ are unnecessary. For all cases, the sets obtained and included in $\mathfrak{g}_\ell (\mathcal{A})$ and $\mathfrak{c}_\ell (\mathcal{A})$, respectively, are the same. Table \ref{table_exple_val} shows the valuations $v_\ell(\mathcal{A}_5), v_\ell(\mathcal{A}_{15}), v_\ell(\mathcal{A}_{25}),\dots$ for the elliptic curve case, as well as the valuations for the other examples of Table \ref{table_exple}.

\begin{table}[H]%[<placement-specifier>]
\footnotesize
\begin{center}
\begin{tabular}{cccccccccccccccccccccc}
\hline
$(a,q)_g$ & $\ell$ & $v_\ell(\mathcal{A})$ & \multicolumn{18}{c}{$v_\ell(\mathcal{A}_n)$} & Last $n$ \\
\hline
$(1,73)_1$  & 5 & $2$ & $3$ & $3$ & $4$ & $3$ & $3$ & $3$ & $3$ & $4$ & $3$ & $3$ & $3$ & $3$ & $5$ & $3$ & $3$ & $3$ & $3$ & $4$ & $175$\\ \hline
$(11,17)_3$  & 5 & $2$ & $3$ & $4$ & $3$ & $3$ & $3$ & $3$ & $3$ & $3$ & $5$ & $3$ & $3$ & $4$ & $3$ & $3$ & $3$ & $3$ & $3$ & $3$ & $265$\\ \hline
$(17,19)_3$ & $23$ & $2$ & $4$ & $4$ & $4$ & $6$ & $4$ & $4$ & $4$ & $6$ & $4$ & $4$ & $4$ & $4$ & $4$ & $4$ & $4$ & $4$ & $4$ & $4$ & $1219$\\ \hline
$(20,7)_6$ & $41$ & $2$ & $3$ & $3$ & $3$ & $3$ & $3$ & $3$ & $3$ & $3$ & $3$ & $3$ & $3$ & $3$ & $3$ & $4$ & $3$ & $3$ & $3$ & $3$ & $2173$\\ 
\hline
\end{tabular}
\end{center}
\caption{Valuation $v_\ell(\mathcal{A}_n)$ for $n$ in the sets of Table \ref{table_exple} }\label{table_exple_val}%
\end{table}

Consider the example given previously $\mathcal{A}_1=(11,17)_3$, that is, defined by the Weil polynomial
\[
f_{\mathcal{A}} (t) =t^{6} +11t^3 + 17^3.
\]
Table \ref{table_case_complet} shows information about extensions $\mathcal{A}_n$ for $n$ up to $10$.
As expected from (the proof of) Lemma \ref{lemma:a_n}, when $n$ and $g$ have a common factor, the isogeny class $\mathcal{A}_n$ is not Weil-central. 
The Weil polynomial of $\mathcal{A}_n$ is of the form $h^e$, with $h$ irreducible and ordinary. So $\mathcal{A}_n$ is the power of a lower dimensional class of abelian varieties, which are Weil central. We represent them as $(a,q)_{g'}^e$ in Table \ref{table_case_complet}.
For example, $\mathcal{A}_3$ is the product of three copies of the isogeny class of elliptic curves with Weil polynomial $t^{2} +11t + 17^3$.
Theorem \ref{thm:main} says that the $5$-part grows with respect to the base field for $n=5$. Besides that, we see that for $n \in \{4,8,10\}$ 
the $5$-part grows as well. Among these extensions, $\mathcal{A}_n$ is $5$-cyclic for $n=5$ (Theorem \ref{thm:main}). It is not $5$-cyclic for $n=4$, and thus for $n=8$ (since $\mathcal{A}_8$ is an extension of $\mathcal{A}_4$), but it is $5$-cyclic for $n=10$.

\begin{table}[h]%[<placement-specifier>]
\footnotesize
\begin{center}
\begin{tabular}{@{}cccccc@{}}
$n$ & $\mathcal{A}_n$ & Simple? & Weil-central? & $v_5(f_n(1))$ & $5$-cyclic?\\ \hline \hline
$1$ & $(11,17^1)_3$ &    {\color{ForestGreen} Yes} & {\color{ForestGreen} Yes} & $2$ & {\color{ForestGreen} Yes}\\ \hline
$2$ & $(9705,17^2)_3$ & {\color{ForestGreen} Yes} & {\color{ForestGreen} Yes} & $2$ & {\color{ForestGreen} Yes}\\ \hline
$3$ & $(11,17^3)_1^3$ & {\color{BrickRed} No} & {\color{BrickRed} No} & $2$ & {\color{BrickRed} No}\\ \hline
$4$ & $(-45911887,17^4)_3$ & {\color{ForestGreen} Yes} & {\color{ForestGreen} Yes} & $3$ & {\color{BrickRed} No}\\ \hline
$5$ & $(1295031331,17^5)_3$ & {\color{ForestGreen} Yes} & {\color{ForestGreen} Yes} & $3$ & {\color{ForestGreen} Yes}\\ \hline
$6$ & $(9705,17^6)_1^3$ & {\color{BrickRed} No} & {\color{BrickRed} No} & $2$ & {\color{BrickRed} No}\\ \hline
$7$ & $(-8687006247293,17^7)_3$ & {\color{ForestGreen} Yes} & {\color{ForestGreen} Yes} & $2$ & {\color{ForestGreen} Yes}\\ \hline
$8$ & $(-942656893441247,17^8)_3$ & {\color{ForestGreen} Yes} & {\color{ForestGreen} Yes} & $3$ & {\color{BrickRed} No}\\ \hline
$9$ & $(-160798,17^9)_1^3$ & {\color{BrickRed} No} & {\color{BrickRed} No} & $2$ & {\color{BrickRed} No}\\ \hline
$10$ & $(4047739954748000025,17^{10})_3$ & {\color{ForestGreen} Yes} & {\color{ForestGreen} Yes} & $3$ & {\color{ForestGreen} Yes}\\ \hline
\hline
\end{tabular}
\end{center}\caption{$\mathcal{A}_n$ for $n$ up to $10$, where $\mathcal{A}_1=(11,17)_3$}\label{table_case_complet}%
\end{table}

\subsubsection*{Acknowledgments} 
I would like to express my gratitude to the anonymous referee for the relevant comments that led to this improved version of the article.

%%% REFERENCES %%%
{\small

}

\EditInfo{June 27, 2025}{September 19, 2025}{Tiago Macedo, Jaqueline Mesquita, Mariel Saez and Rafael Potrie}

\end{document}